\documentclass[12pt]{amsart}


\hoffset=0in
\voffset=-.2in
\oddsidemargin=0in
\evensidemargin=0in
\topmargin=0.1in
\headsep=0.15in
\headheight=8pt
\textwidth=6.5in
\textheight=8.75in


\usepackage{amssymb}
\usepackage{epsfig}
\usepackage{url}
\usepackage{array}
\usepackage{varwidth}
\usepackage{slashbox}


\newtheorem{theorem}{Theorem}[section]
\newtheorem{lemma}[theorem]{Lemma}

\theoremstyle{definition}

\newtheorem{conjecture}[theorem]{Conjecture}
\numberwithin{equation}{section}

\theoremstyle{remark}

\numberwithin{equation}{section}


\newcommand{\I}{{\mathcal{I}}}
\newcommand{\B}{{\mathcal{B}}}


\title[]{$h$-vectors of Small Matroid Complexes}

\author{Jes\'us A. De Loera, Yvonne Kemper, Steven Klee}

\date{\today}

\begin{document}

\maketitle


\begin{abstract}
Stanley conjectured in 1977 that the $h$-vector of a matroid simplicial complex is a pure $O$-sequence.  We give simple constructive proofs that the conjecture is true for matroids of rank less than or equal to 3, and corank 2. We used computers to verify that Stanley's conjecture holds for all matroids on at most nine elements.
\end{abstract}

\section{Introduction}

Before stating the key goal of our investigations and stating our results, we will briefly review some relevant background material on matroids and simplicial complexes.  For further information, we refer the reader to the books of Oxley \cite{Oxley-book}, White \cite{White-book}, and Stanley \cite{Stanley-book}.

Recall that a matroid $M = (E(M),\I(M))$ consists of a \textit{ground set} $E(M)$ and a family of subsets $\mathcal{I}(M) \subseteq 2^{E(M)}$ called \textit{independent sets} such that
\begin{enumerate}
\item[(I1):] $\emptyset \in \I(M)$;
\item[(I2):] If $A \in \I(M)$ and $A' \subset A$, then $A' \in \I(M)$; and
\item[(I3):] If $A,A' \in \I(M)$ with $|A| < |A'|$, then there is some $e \in A'\setminus A$ such that $A \cup e \in \I(M)$.
\end{enumerate}

Equivalently, the independent sets of a matroid $M$ on the ground set
$E(M)$ form a simplicial complex, called the \textit{independence complex of $M$}, with the property that the
restriction $\mathcal{I}(M)|_{E'}$ is pure for any subset $E'
\subseteq E(M)$.  A \textit{basis} of $M$ is a maximal independent set
under inclusion.  The \textit{rank} of a subset $E' \subseteq E(M)$ is the size of the largest independent set $A \subseteq E'$; in particular, the \textit{rank} of $M$ is the cardinality of a basis.  A \textit{loop} is a singleton $\{e\} \notin \mathcal{I}(M)$.  Since the loops of a matroid are not seen by the independence complex, no generality will lost in only considering loopless matroids.  

If $M$ is a loopless matroid, elements $e,e' \in E(M)$ are \textit{parallel} if $\{e,e'\} \notin \mathcal{I}(M)$.  The \textit{parallelism classes} of $M$ are maximal subsets $E_1,\ldots,E_t \subseteq E(M)$ with the property that all elements in each set $E_i$ are parallel.  It can be easily checked that if $\{e_{i_1},\ldots,e_{i_k}\} \in \mathcal{I}(M)$ with $e_{i_j} \in E_{i_j}$, then $\{e'_{i_1},\ldots,e'_{i_k}\} \in \mathcal{I}(M)$ for any choice of $e'_{i_j} \in E_{i_j}$.  Alternatively, the parallelism classes of $M$ are maximal rank-one subsets of $E(M)$. 

Given a matroid $M$ on the ground set $E(M)$ with bases $\B(M)$, we define its \textit{dual matroid}, $M^*$, to be the matroid on $E(M)$ whose bases are $\B(M^*) = \{E\setminus B: B \in \B(M)\}$.  We say that $\{e\}$ is a \textit{coloop} in $M$ if $\{e\}$ is a loop in $M^*$ or, equivalently, if each basis of $M$ contains $e$.  

If $M$ is a matroid of rank $d$, the \textit{$f$-vector} of $M$ is $f(M):=(f_{-1}(M),f_0(M),\ldots,f_{d-1}(M))$, whose entries are $f_{i-1}(M):=|\{A \in \I(M): |A| = i\}|$.  Oftentimes, it is more convenient to study the \textit{$h$-vector} $h(M):=(h_0(M),\ldots,h_d(M))$ whose entries are defined by the relation $$\sum_{j=0}^dh_j(M)\lambda^{j} = \sum_{i=0}^df_{i-1}(M)\lambda^i(1-\lambda)^{d-i}.$$   See \cite{Stanley-book} for more on $h$-vectors and the combinatorics of simplicial complexes.

It should not be expected that the $h$-numbers of a general simplicial complex are nonnegative; however, the $h$-numbers of a matroid $M$ may be interpreted combinatorially in terms of certain invariants of $M$. Fix a total ordering $\{v_1<v_2<\ldots<v_n\}$ on $E(M)$.  Given a basis $B \in \I(M)$, an element $v_j \in B$ is \textit{internally  passive in $B$} if there is some $v_i \in E(M) \setminus B$ such that $v_i<v_j$ and $(B\setminus v_j)\cup v_i$ is a basis of $M$.  Dually, $v_j \in E(M)\setminus B$ is \textit{externally passive in $B$} if there is an element $v_i \in B$ such that $v_i<v_j$ and $(B \setminus v_i) \cup v_j$ is a basis.  (Alternatively, $v_j$ is externally passive in $B$ if it is internally passive in $E(M) \setminus B$ in $M^*$.)  It is well known (\cite[Equation (7.12)]{White-book}) that 
\begin{equation} \label{h-nums-matroid} 
\sum_{j=0}^dh_j(M)\lambda^j = \sum_{B \in \B(M)}\lambda^{ip(B)},
\end{equation}
where $ip(B)$ counts the number of internally passive elements in $B$.  This proves that the $h$-numbers of a matroid complex are nonnegative.   Alternatively,
\begin{equation} \label{h-nums-dual}
\sum_{j=0}^dh_j(M)\lambda^j = \sum_{B \in \B(M^*)}\lambda^{ep(B)},
\end{equation}
where $ep(B)$ counts the number of externally passive elements in $B$.  Since the $f$-numbers (and hence the $h$-numbers) of a matroid depend only on its independent sets, equations \eqref{h-nums-matroid} and \eqref{h-nums-dual} hold for \textit{any} ordering of the ground set of $M$. It is worth
remarking that the $h$-polynomial above is actually a specialization of the well-known \emph{Tutte polynomial} of the corresponding matroid 
(see \cite{White-book}).

An \textit{order ideal} $\mathcal{O}$ is a family of monomials (say of degree at most $r$) with the property that if $\mu \in \mathcal{O}$ and $\nu|\mu$, then $\nu \in \mathcal{O}$.  Let $\mathcal{O}_i$ denote the collection of monomials in $\mathcal{O}$ of degree $i$.  Let $F_i(\mathcal{O}):=|\mathcal{O}_i|$ and  $F(\mathcal{O}) = (F_0(\mathcal{O}),F_1(\mathcal{O},\ldots,F_r(\mathcal{O}))$.  We say that $\mathcal{O}$ is \textit{pure} if all of its maximal monomials (under divisibility) have the same degree.  A vector $\mathbf{h} = (h_0,\ldots,h_d)$ is a \textit{pure $O$-sequence} if there is a pure order ideal $\mathcal{O}$ such that $\mathbf{h} = F(\mathcal{O})$.

A longstanding conjecture of Stanley \cite{Stanley-CM-complexes} suggests that matroid $h$-vectors are highly structured.
\begin{conjecture} \label{Stanley-conjecture}
For any matroid $M$, $h(M)$ is a pure $O$-sequence.
\end{conjecture}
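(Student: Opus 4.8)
Conjecture~\ref{Stanley-conjecture} is open in general, so the realistic plan is to prove it in the two infinite families announced above — matroids of rank at most $3$ and matroids of corank $2$ — by explicit construction, and then to dispose of the remaining small cases (matroids on at most nine elements) by computer. In every instance the goal is the same: exhibit a pure order ideal $\mathcal{O}$ in $h_1(M)=f_0(M)-d$ variables with $F_i(\mathcal{O})=h_i(M)$ for all $i$. The combinatorial formulas \eqref{h-nums-matroid} and \eqref{h-nums-dual} serve as the bookkeeping device that tells us what the target numbers $h_i(M)$ actually are, and two elementary reductions clear away special features: adding an element parallel to one in a class of size $a$ changes $h(M)$ in a controlled way, so (together with a ``thickening'' recipe for the corresponding order ideal) it suffices to treat \emph{simple} matroids; and splitting off coloops does not change $h(M)$ at all, so we may also assume $M$ is coloop-free, which pins down the top of the $h$-vector.

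For rank $d\le 3$ the construction is driven by the flat structure of the simplification $\overline{M}$. The cases $d\le 1$ are immediate. For $d=2$, $\overline{M}$ is a line and $M$ has parallelism classes of sizes $a_1,\dots,a_t$; one computes $h(M)=\bigl(1,\,n-2,\,\binom{n-1}{2}-\sum_i\binom{a_i}{2}\bigr)$ and writes down by hand a pure order ideal on $n-2$ variables with that many quadratic monomials (a suitable graph-like ideal in degree two works in every regime of the middle entry). For $d=3$ one organizes the count in \eqref{h-nums-matroid} according to the rank-$2$ flats (lines) of $\overline{M}$: the bases are the non-collinear triples, the number of internally passive elements of a basis is governed by which lines its points lie on, and assembling the order ideal one line at a time produces an explicit pure multicomplex whose maximal monomials can be listed — which is exactly what purity asks for.

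For corank $2$ we dualize. By \eqref{h-nums-dual} we have $\sum_j h_j(M)\lambda^j=\sum_{B\in\B(M^*)}\lambda^{ep(B)}$ with $M^*$ of rank $2$, so the problem becomes an analysis of the external-activity statistic of a rank-$2$ matroid together with its parallelism classes — structurally the same as the rank-$2$ computation above, with ``externally passive'' replacing ``internally passive'' — and it again yields an explicit pure order ideal realizing the resulting vector.

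Finally, for matroids on at most nine elements one enumerates all loopless simple matroids from the known catalogue, computes each $h(M)$ from the independent-set counts, and for each vector searches for a realizing pure order ideal — a finite search that can be run as a backtracking enumeration over monomials or cast as an integer feasibility problem. The recurring difficulty throughout is \textbf{purity}, not the $F$-vector: matching $F_i(\mathcal{O})$ to $h_i(M)$ is routine, but guaranteeing that \emph{every} maximal monomial of $\mathcal{O}$ has top degree $d$ — especially once $\mathcal{O}$ has been glued together from pieces coming from different lines, parallelism classes, or from the dual matroid — is delicate, and it is precisely this feature that keeps the full conjecture out of reach; a uniform proof would seem to demand a construction of the pure order ideal that is compatible with deletion, contraction, and direct sums.
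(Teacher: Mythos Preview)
Your overall plan coincides with the paper's: prove the conjecture for rank $\le 3$, corank $2$, and ground sets of size $\le 9$, leaving the general case open. For rank $2$ and corank $2$ your sketches are essentially what the paper does (explicit order ideal from parallelism-class sizes, and the map $B\mapsto x_1^{a_1(B)}x_2^{a_2(B)}$ via externally passive elements, respectively), and for $\le 9$ elements both you and the paper defer to a computer search.

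The rank-$3$ argument, however, diverges from the paper and as written has a gap. You propose to track the rank-$2$ flats of $\overline{M}$ and ``assemble the order ideal one line at a time''; the paper does something structurally different and simpler. The key observation you are missing is that for a rank-$3$ matroid with parallelism classes of sizes $s_1,\dots,s_t$, the numbers $h_1(M)$ and $h_2(M)$ are determined \emph{entirely} by $(s_1,\dots,s_t)$ --- they do not see the line structure at all --- so one can fix $\mathcal{O}_1$ and $\mathcal{O}_2$ once, uniformly, and only $h_3(M)$ remains to be matched. The paper then sandwiches $h_3(M)$ between an explicit upper bound (coming from $f_2(M)\le\sum_{i<j<k}s_is_js_k$) and an explicit lower bound coming from the Brown--Colbourn inequality $h_3\ge h_2-h_1+h_0$, constructs a pure order ideal realizing each extreme with the \emph{same} $\mathcal{O}_1,\mathcal{O}_2$ and nested degree-$3$ parts $\widetilde{\mathcal{O}}_3\subseteq\mathcal{O}_3$, and finally picks any intermediate $\mathcal{O}'_3$ of the right size. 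Your line-by-line assembly never invokes a lower bound on $h_3$, and without one it is unclear how you rule out the possibility that $h_3(M)$ is too small for any pure extension of your $\mathcal{O}_2$; the Brown--Colbourn inequality is precisely the missing ingredient.
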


Conjecture \ref{Stanley-conjecture} is known to hold for several
families of matroid complexes, such as paving matroids
\cite{Merino-paving}, cographic matroids \cite{Merino-cographic},
cotransversal matroids \cite{Oh-contransversal}, lattice path matroids
\cite{Schweig-LP}, and matroids of rank at most three
\cite{Ha-Stokes-Zanello, Stokes-thesis}.  The purpose of this paper is
to present a proof of Stanley's conjecture for all matroids on at
most nine elements, all matroids of corank two and all matroids of
rank at most three.  While Stanley's conjecture was known to hold for
matroids of rank two \cite{Stokes-thesis} and rank three
\cite{Ha-Stokes-Zanello}, we use the geometry of the independence
complexes of matroids of small rank to provide much simpler shorter proofs in
these cases.  Our results show that any counterexample to Stanley's
conjecture must have at least ten elements, rank at least four, and corank at least three.

This article will use several ideas from the theory of multicomplexes
and monomial ideals. Although a general classification of matroid
$h$-vectors or pure $O$-sequences seems to be an incredibly difficult
problem, some properties are known and will be used in the proofs
below:

\begin{theorem}\cite{Brown-Colbourn, Chari, Hibi}
Let $\mathbf{h} = (h_0,h_1,\ldots,h_d)$ be a matroid $h$-vector or a pure $O$-sequence with $h_d \neq 0$.  Then
\begin{enumerate}
\item $h_0 \leq h_1 \leq \cdots \leq h_{\lfloor \frac{d}{2}\rfloor}$,
\item $h_i \leq h_{d-i}$ for all $0 \leq i \leq \lfloor \frac{d}{2} \rfloor$, and
\item for all $0 \leq s \leq d$ and $\alpha \geq 1$, we have
\begin{equation}\label{BC-ineq}
\sum_{i=0}^s(-\alpha)^{s-i}h_i \geq 0.
\end{equation}
\end{enumerate}
\end{theorem}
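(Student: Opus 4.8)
The plan is to read this statement as a bundling of three separate facts and to prove it one implication at a time, splitting first on the hypothesis --- $\mathbf{h}$ a matroid $h$-vector versus $\mathbf{h}$ a pure $O$-sequence --- and then on whether we are after the ``shape'' inequalities (1)--(2) or the sharper Brown--Colbourn inequality \eqref{BC-ineq}. It is convenient throughout to package the data as the polynomial $H(t):=\sum_{i=0}^d h_i t^i$; for example \eqref{BC-ineq} is equivalent to the assertion that the coefficients through degree $d$ of the power series $H(t)/(1+\alpha t)$ are nonnegative for every $\alpha\ge1$, i.e. a root-location / partial-fraction statement about $H$. The hypothesis $h_d\ne0$ is used only to certify that $d$ is the true top degree (the underlying complex is pure of dimension $d-1$ and not a cone), so that the top-degree comparisons are not corrupted by artificial vanishing.

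For (1) and (2) in the \emph{matroid} case I would invoke Chari's convex ear decomposition of $\I(M)$: one writes $\I(M)=\Sigma_1\cup\cdots\cup\Sigma_m$, where $\Sigma_1$ is the boundary complex of a simplicial $d$-polytope and each later $\Sigma_j$ is a shellable $(d-1)$-ball whose boundary lies inside $\Sigma_1\cup\cdots\cup\Sigma_{j-1}$. Summing the (relative) $h$-vectors of the pieces, $h(\Sigma_1)$ is symmetric and $g$-nonnegative while every later summand is nonnegative and concentrated in degrees $\ge1$ with a nondecreasing bottom half; the resulting bookkeeping --- the standard $h$-vector consequence of a convex ear decomposition (Chari, Swartz) --- gives simultaneously $h_0\le h_1\le\cdots\le h_{\lfloor d/2\rfloor}$ and $h_i\le h_{d-i}$. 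For (1) and (2) in the \emph{pure $O$-sequence} case I would instead run Hibi's purity-driven injections: given a pure order ideal $\mathcal{O}$ of top degree $d$ and $i\le d/2$, every $\mu\in\mathcal{O}_i$ lies below a maximal monomial of degree $d$, and selecting a divisor of the complementary factor by a greedy (Kruskal--Katona / compression) rule produces an injection $\mathcal{O}_i\hookrightarrow\mathcal{O}_{d-i}$, whence $h_i\le h_{d-i}$; the same compression applied to consecutive degrees below $d/2$ gives $h_{i-1}\le h_i$.

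Item (3) is the crux. In the \emph{matroid} case I would start from the activities expansions \eqref{h-nums-matroid} and \eqref{h-nums-dual}, $H(t)=\sum_{B\in\B(M)}t^{ip(B)}=\sum_{B\in\B(M^*)}t^{ep(B)}$, together with the fact (noted above) that $H$ is a specialization of the Tutte polynomial, and try to exhibit for each $s$ a sign-coherent weighting of bases that witnesses \eqref{BC-ineq} --- this is, in effect, Brown and Colbourn's route through the reliability / corank-nullity polynomial. In the \emph{pure $O$-sequence} case I would transfer \eqref{BC-ineq} to the monomial ideal generated by the maximal monomials of $\mathcal{O}$ and prove it by the same compression technique, reducing to the lex-compressed order ideal, where the alternating $\alpha$-weighted partial sums can be estimated directly.

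The step I expect to fight hardest with is (3). Unlike (1) and (2), each of which is a single inequality per degree that falls to one explicit injection, \eqref{BC-ineq} is a one-parameter family of inequalities, so a proof must either supply a root-location theorem for $H(t)$ uniform in $\alpha$ or build a weighting/injection scheme valid for all $\alpha$ at once; and on the pure-$O$-sequence side there is no shelling or ear decomposition available, so the combinatorics of the compressed order ideal must carry the entire argument.
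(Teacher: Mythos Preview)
The paper does not prove this theorem at all: it is quoted, with attribution to \cite{Brown-Colbourn, Chari, Hibi}, as background, and only the Brown--Colbourn inequality~\eqref{BC-ineq} is ever invoked later (in the rank-$3$ argument, and there only for a matroid $h$-vector). So there is no ``paper's proof'' to compare your outline against; your sketches for (1)--(2) via Chari's convex ear decomposition on the matroid side and Hibi's injections on the pure-$O$-sequence side, and for (3) in the matroid case via the reliability/Tutte polynomial, are broadly in the spirit of the cited sources.

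There is, however, a genuine gap in your plan, and it is not merely a hard step but an impossible one: item (3) is \emph{false} for pure $O$-sequences, so the compression argument you propose cannot succeed. Take the pure order ideal generated by $x^3$ and $y^3$; its $F$-vector is $(1,2,2,2)$, a pure $O$-sequence with $h_3=2\ne0$. With $s=3$ and $\alpha=2$ the left side of \eqref{BC-ineq} is
\[
(-2)^3\cdot 1+(-2)^2\cdot 2+(-2)^1\cdot 2+(-2)^0\cdot 2=-8+8-4+2=-2<0.
\]
Thus the theorem as literally written is a slightly loose bundling of three separate results: (1) and (2) are Hibi's flawlessness for pure $O$-sequences together with Chari's inequalities for matroid complexes, while (3) is the Brown--Colbourn inequality for matroid $h$-vectors only. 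You should drop the attempt to establish \eqref{BC-ineq} in the pure-$O$-sequence case; the difficulty you anticipated there (``the combinatorics of the compressed order ideal must carry the entire argument'') is real precisely because the statement does not hold.
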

Inequality \eqref{BC-ineq} is known as the Brown-Colbourn inequality \cite[Theorem 3.1]{Brown-Colbourn}.


\section{Rank-$2$ Matroids}

Let $M$ be a loopless matroid of rank $2$.  The independence complex of $M$ is a complete multipartite graph whose partite sets $E_1,\ldots,E_t$ are the parallelism classes of $M$.  Let $s_i:=|E_i|$.  Choose one representative $e_i \in E_i$ from each parallelism class of $M$ so that the simplification of $M$ is a complete graph on $\{e_1,\ldots,e_t\}$, and let $\widetilde{E}_i = E_i \setminus e_i$.  Clearly
\begin{eqnarray*}
f_0(M) &=& \sum_{i=1}^t(s_i-1) + t  \\
\text{and } f_1(M) &=& \sum_{1 \leq i<j \leq t}(s_i-1)(s_j-1) + (t-1)\sum_{i=1}^t(s_i-1) + {t \choose 2},
\end{eqnarray*}
and hence,
\begin{eqnarray*}
h_1(M) &=& \sum_{i=1}^t(s_i-1) + (t-2)  \\
\text{and } h_2(M) &=& \sum_{1 \leq i<j \leq t}(s_i-1)(s_j-1) + (t-2)\sum_{i=1}^t(s_i-1) + {t-1 \choose 2}.
\end{eqnarray*}

Consider the pure $O$-sequence $\mathcal{O}$ with 
\begin{eqnarray*}
\mathcal{O}_1 &=& \{x_1,\ldots,x_{t-2}\} \cup \{x_e: e \in \widetilde{E}_i, 1 \leq i \leq t\} \\
\mathcal{O}_2 &=& \{x_ex_{e'}: e \in \widetilde{E}_i, e' \in \widetilde{E}_j, 1 \leq i<j\leq t\} \\
&& \cup \{x_ix_e: e \in \widetilde{E}_j, 1 \leq i \leq t-2, 1 \leq j \leq t\} \\
&& \cup \{\text{degree $2$ monomials in } x_1,\ldots,x_{t-2}\}.
\end{eqnarray*}

We see that $h(M) = F(\mathcal{O})$, which proves the following theorem. 

\begin{theorem}
Let $M$ be a matroid of rank $2$.  Then $h(M)$ is a pure $O$-sequence. 
\end{theorem}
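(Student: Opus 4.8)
The plan is to exhibit a pure order ideal $\mathcal{O}$ with $F(\mathcal{O}) = h(M)$, following exactly the construction set up above; since $f$-vectors and $h$-vectors ignore loops, assume $M$ is loopless, and note that a loopless rank-$2$ matroid has at least two parallelism classes, i.e.\ $t \geq 2$.

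First I would record the combinatorics: the independence complex of $M$ is the complete multipartite graph on $E_1,\dots,E_t$, so counting vertices and edges gives $f_0(M)$ and $f_1(M)$, and expanding $\sum_j h_j(M)\lambda^j = f_{-1}(M)(1-\lambda)^2 + f_0(M)\lambda(1-\lambda) + f_1(M)\lambda^2$ yields $h_0(M)=1$, $h_1(M)=f_0(M)-2$, and $h_2(M)=f_1(M)-f_0(M)+1$; rewriting these in terms of the $s_i=|E_i|$ and $t$ gives the displayed formulas. Then I would verify the three properties of $\mathcal{O} = \{1\}\cup\mathcal{O}_1\cup\mathcal{O}_2$. (i) It is an order ideal: any proper divisor of a monomial in $\mathcal{O}_2$ is either $1$ or a single variable appearing in $\mathcal{O}_1$. (ii) $F(\mathcal{O}) = h(M)$: here $F_0(\mathcal{O})=1=h_0(M)$ and $F_1(\mathcal{O}) = |\mathcal{O}_1| = \sum_{i}(s_i-1) + (t-2) = h_1(M)$, while $|\mathcal{O}_2|$ splits according to the three parts of its definition — the mixed products $x_e x_{e'}$ with $e,e'$ in distinct classes (contributing $\sum_{i<j}(s_i-1)(s_j-1)$), the products $x_i x_e$ (contributing $(t-2)\sum_i(s_i-1)$), and the degree-$2$ monomials in $x_1,\dots,x_{t-2}$ (of which there are $\binom{t-2}{2}+(t-2)=\binom{t-1}{2}$) — and summing these reproduces the displayed formula for $h_2(M)$.

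The one place where care is needed is (iii), purity: every maximal monomial of $\mathcal{O}$ must have the same degree. If $t\geq 3$, there is an auxiliary variable $x_1$, and then $x_1^2\in\mathcal{O}_2$ and $x_i x_e\in\mathcal{O}_2$ for every auxiliary $x_i$ and every $e$, so no degree-$1$ monomial is maximal and $\mathcal{O}$ is pure of top degree $2$. If $t=2$ there are no auxiliary variables, and I would split on whether $\widetilde{E}_1,\widetilde{E}_2$ are empty: if both are nonempty then every $x_e$ divides some mixed product $x_e x_{e'}$, while if one of them is empty then $\mathcal{O}_2=\emptyset$, $h_2(M)=(s_1-1)(s_2-1)=0$, and $\mathcal{O}$ is (trivially) pure of top degree at most $1$. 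In all cases $\mathcal{O}$ is a pure order ideal with $F(\mathcal{O})=h(M)$, completing the proof. I do not expect a genuine obstacle; the only subtlety is this degenerate small-$t$ / size-one-class bookkeeping. The auxiliary variables $x_1,\dots,x_{t-2}$ are there precisely to account for the $h$-vector of the simplification $U_{2,t}$, namely $(1,t-2,\binom{t-1}{2})$, to which the non-representative elements of the parallelism classes then contribute the remaining terms of $h_1(M)$ and $h_2(M)$.
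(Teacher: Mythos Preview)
Your proposal is correct and follows exactly the paper's approach: the paper simply writes down the order ideal $\mathcal{O}$ and asserts $h(M)=F(\mathcal{O})$, while you supply the verifications (order-ideal closure, the three-term count for $|\mathcal{O}_2|$, and purity). Your case analysis for $t=2$ and for size-one parallelism classes is a detail the paper leaves implicit, and your treatment of it is fine.
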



\section{Corank-$2$ Matroids}

In this section, we aim to prove Conjecture \ref{Stanley-conjecture} for corank-$2$ matroids.  

\begin{theorem}\label{corank2}
Let $M$ be a matroid of rank 2.  Then $h(M^*)$ is a pure $O$-sequence. 
\end{theorem}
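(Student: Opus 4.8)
The plan is to compute $h(M^*)$ in closed form and then exhibit an explicit pure order ideal in two variables whose $F$-vector equals it. First I would reduce to the case that $M$ is loopless: if $e$ is a loop of $M$, then $e$ is a coloop of $M^*$, so $M^*=N\oplus\{e\}$ with $N=(M\setminus e)^*$, and the identity $h(N\oplus\{e\})=h(N)$ (a one-line computation with $f$-vectors) together with the fact that $M\setminus e$ still has rank $2$ lets an induction on the number of loops dispose of all loops. So assume $M$ is loopless of rank $2$ with parallel classes $E_1,\dots,E_t$ of sizes $s_1,\dots,s_t$; then $t\ge 2$ and $\sum_i s_i=n:=|E(M)|$, and $M^*$ has rank $n-2$.

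To obtain a formula for $h(M^*)$, I would fix the order on $E(M)$ in which every element of $E_i$ precedes every element of $E_{i+1}$. Since $(M^*)^*=M$, equation \eqref{h-nums-dual} applied to $M^*$ reads $\sum_i h_i(M^*)\lambda^i=\sum_{B\in\B(M)}\lambda^{ep(B)}$, where $ep(B)$ counts externally passive elements of the basis $B$ of $M$. A basis of $M$ is a pair $\{e,e'\}$ where $e$ is the $p$-th element of some $E_i$ and $e'$ the $q$-th element of some $E_j$ with $i<j$; checking the cases (an element $v\notin\{e,e'\}$ is externally passive exactly when $v$ comes after $e$ and is not parallel to $e'$, or $v$ comes after $e'$ and is not parallel to $e$) gives $ep(\{e,e'\})=m_i-p-q$ with $m_i:=s_i+s_{i+1}+\cdots+s_t$. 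Summing over all bases and using that, for each fixed $j$, the quantities $m_i-p$ with $1\le i\le j-1$ and $1\le p\le s_i$ run bijectively over $\{m_j,m_j+1,\dots,n-1\}$ (the blocks $[m_{i+1},m_i-1]$ tile $[m_j,n-1]$), the triple sum telescopes. Writing $[s]:=1+\lambda+\cdots+\lambda^{s-1}$, $\sigma_j:=s_1+\cdots+s_j$ and $\tau_j:=s_{j+1}+\cdots+s_t=n-\sigma_j$, one gets
\[
\sum_i h_i(M^*)\lambda^i=\sum_{j=2}^{t}\lambda^{\tau_j}\,[s_j]\,[\sigma_{j-1}],
\]
and since $\tau_j+\sigma_j=n$, the $j$-th term occupies exactly the degrees $\tau_j,\tau_j+1,\dots,n-2$.

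For the order ideal, for $j=2,\dots,t$ let $C_j$ be the set of monomials $x^ay^b$ with $0\le a\le\sigma_{j-1}-1$ and $\tau_j\le b\le\tau_{j-1}-1$ (a $\sigma_{j-1}\times s_j$ grid, since $\tau_{j-1}-\tau_j=s_j$), and set $\mathcal O:=\bigcup_{j=2}^{t}C_j$. The windows $[\tau_j,\tau_{j-1}-1]$ for $j=t,t-1,\dots,2$ partition $[0,\tau_1-1]$, so the $C_j$ are pairwise disjoint, and counting the monomials of each degree shows $F_i(\mathcal O)=h_i(M^*)$ for all $i$. Moreover $\mathcal O$ is an order ideal: starting from a monomial of $C_j$, dividing by $x$ stays in $C_j$, while dividing by $y$ either stays in $C_j$ or lands in $C_{j+1}$, whose $a$-window $[0,\sigma_j-1]$ is at least as wide. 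And $\mathcal O$ is pure of degree $n-2$: the top corner $x^{\sigma_{j-1}-1}y^{\tau_{j-1}-1}$ of $C_j$ has degree $\sigma_{j-1}+\tau_{j-1}-2=n-2$ and is divisible by every monomial in $C_j$, so every monomial of $\mathcal O$ divides one of degree $n-2$. Hence $h(M^*)=F(\mathcal O)$ is a pure $O$-sequence.

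The crux is the middle step: doing the external-passivity bookkeeping correctly and, above all, recognizing that the resulting triple sum collapses to the product $\sum_j\lambda^{\tau_j}[s_j][\sigma_{j-1}]$. Once that identity is in hand, the order ideal is essentially forced, and the remaining checks (disjointness, downward closure, purity, the degree-by-degree count) are routine; one should still confirm the degenerate configurations — $t=2$, or a parallel class of size $1$ (a coloop of $M$) — but the formula and the construction handle these uniformly.
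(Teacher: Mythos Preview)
Your argument is correct, and in fact the pure order ideal you produce coincides with the one in the paper: the paper assigns to each basis $B=\{v_i,v_j\}$ (with $v_j\in E_\ell$) the monomial $x_1^{a_1(B)}x_2^{a_2(B)}$, and one checks that as $B$ ranges over bases with $v_j\in E_\ell$ the pair $(a_1(B),a_2(B))$ sweeps out exactly your rectangle $C_\ell$. The difference is purely one of presentation. The paper sets up the map $B\mapsto\mu_B$ directly and verifies the order-ideal and purity properties by manipulating bases (replacing $v_i$ by $v_{i+1}$, etc.), never writing down a closed form for the $h$-polynomial. You instead compute $\sum_j\lambda^{\tau_j}[s_j][\sigma_{j-1}]$ first and then read off the staircase shape; this makes the purity and downward-closure checks slightly cleaner (they become statements about nested rectangles rather than about basis exchanges), at the cost of the extra telescoping step. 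Either way the same two-variable staircase appears, so the two arguments are essentially equivalent.
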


\begin{proof}
As before, let $E_1,\ldots,E_t$ denote the parallelism classes of $M$.  Impose a total order on the ground set $E(M)$ so that $v_i < v_j$ for all $v_i \in E_k$ and $v_j \in E_{\ell}$ with $1 \leq k < \ell \leq t$.

For each basis $B = \{v_i,v_j\}$ of $M$ with $v_i \in E_k$, $v_j \in E_{\ell}$, and $k<\ell$, let 
\begin{eqnarray*}
a_1(B)&:=&\#\{i'>i: v_{i'} \in E_k \cup \cdots \cup E_{\ell-1}\} \\
\text{and } a_2(B) &:=& \#\{j'>j: v_{j'} \in E_{\ell} \cup \cdots \cup E_t\},
\end{eqnarray*}
and set $\mu_B:= x_1^{a_1(B)}x_2^{a_2(B)}$.  We claim that $\mathcal{O}:=\{\mu_B: B \in \B(M)\}$ is a pure order ideal and that $F(\mathcal{O}) = h(M^*)$.

\begin{figure}[ht]
\begin{center}
\epsfig{file=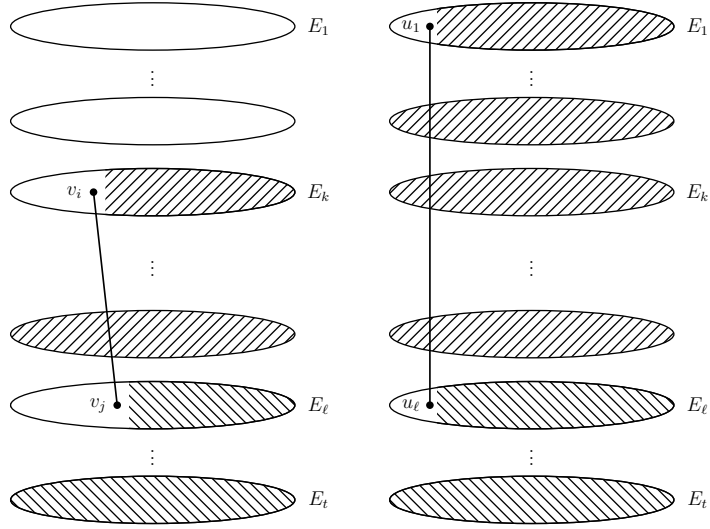, height = 7cm, bbllx=110, bblly=355, bburx=525, bbury=670}
\end{center}
\caption{The bases $B = \{v_i,v_j\}$ (left) and $\widetilde{B} = \{u_1,u_{\ell}\}$ (right) with their externally passive elements shaded.}
\label{corank2-picture}
\end{figure}

We see that $a_1(B)$ counts the number of elements $v \in E(M) \setminus B$ that are externally passive in $B$ for which $v_i < v < v_j$ (shown in Figure \ref{corank2-picture} (left) shaded with lines of slope $1$); and $a_2(B)$ counts the number of elements $v \in E(M) \setminus B$ that are externally passive in $B$ for which $v_j < v \leq v_n$ (shown in Figure \ref{corank2-picture} (left) shaded with lines of slope $-1$).  Since $a_1(B)+a_2(B)$ counts the number of externally passive elements in $B$, Equation \eqref{h-nums-dual} shows that $h(M^*) = F(\mathcal{O})$. 

To see that $\mathcal{O}$ is an order ideal, we need only show that if $\nu|\mu_B$ and $\deg(\nu) = \deg(\mu_B)-1$, then $\nu \in \mathcal{O}$.  Let $B = \{v_i,v_j\}$ as before.  If $a_1(B) > 0$, consider $B' = \{v_{i+1},v_j\} \in \I(M)$.  Clearly $a_1(B') = a_1(B)-1$ and $a_2(B') = a_2(B)$ so that $\mu_{B'} \in \mathcal{O}$ and $\deg(\mu_{B'}) = \deg(\mu_B)-1$.  If $a_2(B)>0$, we must consider two possible cases.  If $v_{j+1} \in E_{\ell}$, then consider $B'' = \{v_i,v_{j+1}\} \in \I(M)$.  Again $a_1(B'') = a_1(B)$ and $a_2(B'') = a_2(B)-1$ so that $\mu_{B''} = x_1^{a_1(B)}x_2^{a_2(B)-1}$.  On the other hand, if $v_{j+1} \in E_{\ell+1}$, then $v_{j-a_1(B)} \in E_{k'}$ for some $k' \leq \ell$, and so $B''' = \{v_{j-a_1(B)},v_{j+1}\} \in \I(M)$.  Again we see that $\mu_{B'''} = x_1^{a_1(B)}x_2^{a_2(B)-1}.$  This establishes that $\mathcal{O}$ is an order ideal.  

Finally, we must show that $\mathcal{O}$ is pure.  For each $1 \leq i \leq t$, let $u_i$ denote the smallest element of $E_i$.  For any basis $B = \{v_i,v_j\}$ as above, let $\widetilde{B} = \{u_1,u_{\ell}\}$.  As Figure \ref{corank2-picture} (right) indicates, $a_1(B) \leq a_1(\widetilde{B})$ and $a_2(B) \leq a_2(\widetilde{B})$, and hence $\mu_B|\mu_{\widetilde{B}}$.  Moreover, $\deg(\mu_{\widetilde{B}}) = |E_1| + \cdots + |E_t|-2$, and hence each such monomial $\mu_{\widetilde{B}}$ has the same degree. 
\end{proof}

The techniques used to prove Theorem \ref{corank2} can be easily extended to prove that $h(M^*)$ is a pure $\mathcal{O}$-sequence for any matroid $M$ whose simplification is a uniform matroid.  The reader may easily check, however, that these techniques may not be used to prove Stanley's conjecture when $M$ is the Fano matroid, thus these techniques may not be extended to corank 3.


\section{Rank-3 Matroids}

Our goal for this section is to give a simple, short, geometric-combinatorial
proof of the following theorem, first proved in
\cite{Ha-Stokes-Zanello} for the case that $d=3$ using the language of
commutative algebra.

\begin{theorem} \label{rank3-o-seq}
Let $M$ be a loopless matroid of rank $d \geq 3$.  The vector $(1,h_1(M),h_2(M),h_3(M))$ is a pure $O$-sequence. 
\end{theorem}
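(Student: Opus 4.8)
The plan is to write down, for each such $M$, an explicit pure order ideal whose $F$-vector is $(1,h_1(M),h_2(M),h_3(M))$, built from the incidence geometry of the rank-one and rank-two flats of $M$ and the reading of the $h$-numbers as counting bases by internal passivity, Equation~\eqref{h-nums-matroid}.

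First I would clear away coloops: if $e$ is a coloop of $M$ then $M\cong M'\oplus U_{1,1}$ with $M'$ of rank $d-1$, so $h(M)=h(M')$, and we may induct on the rank, the rank-$2$ theorem of Section~2 serving as the base case. So assume $M$ is loopless and coloopless, and record the needed $f$-to-$h$ translations. Writing $E_1,\dots,E_p$ for the rank-one flats (points) of $M$, $s_i=|E_i|$, and $n=\sum_i s_i$, a direct computation gives $h_1=n-d$ and $h_2=\binom{h_1+1}{2}-\sum_i\binom{s_i}{2}$; in particular $h_2\le\binom{h_1+1}{2}$, with equality exactly when $M$ is simple, and then moreover $h_3=\binom{h_1+2}{3}-\sum_L\binom{|L|}{3}$, the sum over rank-two flats (lines) $L$ of $M$. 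All of this is uniform in $d\ge 3$, which is why the statement is not special to rank $3$.

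For a simple $M$ the situation is rigid. Since $h_2=\binom{h_1+1}{2}$ is precisely the number of monomials of degree $2$ in $h_1$ variables, any pure order ideal $\mathcal O$ with $F(\mathcal O)=(1,h_1,h_2,h_3)$ must contain every monomial of degree $\le 2$ in $x_1,\dots,x_{h_1}$; all that then remains is to choose a set $\mathcal O_3$ of $h_3$ monomials of degree $3$ in these variables that is collectively divisible by every degree-$2$ monomial, so that $\mathcal O$ stays pure. Equivalently: starting from the order ideal of all monomials of degree $\le 3$ in $h_1$ variables -- which realizes $h(U_{3,\,h_1+3})=\bigl(1,h_1,\binom{h_1+1}{2},\binom{h_1+2}{3}\bigr)$ -- delete the $\sum_L\binom{|L|}{3}$ monomials of degree $3$ matching the collinear triples of $M$, but in such a way that the set of degree-$3$ multiples of any single degree-$2$ monomial is never emptied. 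The non-simple case I would bring back to this by a join-type construction in the spirit of Section~2: adjoin one new variable $x_e$ for each non-representative element $e$ of a parallel class, splice the order ideal of the simplification together with a multipartite family on the new variables, and check that the $\sum_i\binom{s_i}{2}$ corrections to $h_2$ (and the analogous corrections to $h_3$) come out right.

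The crux -- and where the geometry genuinely enters -- is to show that a suitable $\mathcal O_3$ exists: that either $h_3=0$, which (after the coloop reduction) forces $M$ to be degenerate and can be handled directly, or else $h_3$ is at least the minimum number $c(h_1)$ of degree-$3$ monomials in $h_1$ variables that can be collectively divisible by all $\binom{h_1+1}{2}$ degree-$2$ monomials. Since $c(h_1)=\Theta(h_1^{2})$, this amounts to the upper bound $\sum_L\binom{|L|}{3}\le\binom{h_1+2}{3}-c(h_1)$ on the number of collinear triples, a bound with a large margin that should follow from a counting argument over the lines through a fixed point, using that the rank-two flats through a point partition the remaining points (the structural fact behind De Bruijn--Erd\H{o}s); the Brown--Colbourn and Hibi inequalities quoted above are available as a check. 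With the bound in hand one writes $\mathcal O_3$ down by hand -- include for each $i$ one degree-$3$ monomial divisible by $x_i^{2}$ (to cover the square $x_i^{2}$, since only $x_i^{3}$ and $x_i^{2}x_j$ are), add enough squarefree degree-$3$ monomials to cover the remaining degree-$2$ monomials, and pad with further degree-$3$ monomials up to the exact count $h_3$ -- which finishes the proof.
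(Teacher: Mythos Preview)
Your outline heads in a workable direction, but it diverges substantially from the paper's argument and leaves its central step as an assertion rather than a proof. Two concrete issues:

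\emph{The ``crux'' is not established.} For simple $M$ you reduce everything to showing that $h_3(M)$ is at least the minimum size $c(h_1)$ of a set of degree-$3$ monomials in $h_1$ variables whose divisors include every degree-$2$ monomial. You then say this ``should follow'' from a De~Bruijn--Erd\H{o}s style count on lines, with Brown--Colbourn ``available as a check.'' That is exactly backwards relative to what actually carries the proof. The line-counting argument you gesture at is not supplied, and it is not obvious how to extract the needed numerical bound from it. By contrast, the Brown--Colbourn inequality \eqref{BC-ineq} with $\alpha=1$ gives $h_3 \ge h_2 - h_1 + 1$ immediately, and this is precisely the lower bound one needs---it is the engine, not a sanity check.

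\emph{The non-simple case is too vague.} ``Splice the order ideal of the simplification together with a multipartite family on the new variables'' is a plan, not a proof; you would still have to verify that the result is a pure order ideal with the correct $F$-vector, and the interaction between the two pieces in degree $3$ is where the content lies.

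The paper's proof avoids both difficulties by never separating the simple and non-simple cases and never invoking the line structure at all. It works entirely with the parallelism-class sizes $s_1,\dots,s_t$: one computes $h_1(M)$ and $h_2(M)$ exactly, bounds $h_3(M)$ above by the count coming from $f_2(M)\le\sum_{i<j<k}s_is_js_k$, and bounds $h_3(M)$ below by Brown--Colbourn. Lemmas~\ref{h-vec-upper} and~\ref{h-vec-lower} then write down explicit pure order ideals $\mathcal{O}$ and $\widetilde{\mathcal{O}}$ realizing the upper and lower values of $h_3$, with $\widetilde{\mathcal{O}}_3\subseteq\mathcal{O}_3$ and identical degree-$1$ and degree-$2$ parts; any intermediate choice $\widetilde{\mathcal{O}}_3\subseteq\mathcal{O}'_3\subseteq\mathcal{O}_3$ of the correct cardinality finishes the argument. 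What this buys is uniformity (no case split on simplicity), and it replaces your unproved geometric bound with a one-line appeal to \eqref{BC-ineq}.
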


\begin{lemma} \label{h-vec-upper}
For any positive integers $s_1,\ldots,s_t$, the vector $\mathbf{h} = (1,h_1,h_2,h_3)$ with 
\begin{eqnarray*}
h_1 &=& \sum_{i=1}^t (s_i-1)+(t-d), \\
h_2 &=& \sum_{1 \leq i < j \leq t} (s_i-1)(s_j-1) + (t-d)\sum_{i=1}^t(s_i-1) + {t-d+1 \choose 2},\\
h_3 &=& \sum_{1 \leq i<j<k \leq t}(s_i-1)(s_j-1)(s_k-1) + (t-d)\sum_{1 \leq i<j \leq t}(s_i-1)(s_j-1) \\
&&+ {t-d+1 \choose 2}\sum_{i=1}^t(s_i-1)+{t-d+2 \choose 3},
\end{eqnarray*}
is a pure $O$-sequence.
\end{lemma}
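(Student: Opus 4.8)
The plan is to realize the displayed vector as $F(\mathcal{O})$ for an explicitly constructed pure order ideal $\mathcal{O}$, in the same spirit as the rank-$2$ construction above. Throughout we may assume $t\ge d$ (this is automatic in the intended applications, where $t$ is the number of parallelism classes of a rank-$d$ matroid). Work in the polynomial ring generated by $t-d$ ``free'' variables $y_1,\dots,y_{t-d}$ (none if $t=d$) together with variables $x_{i,1},\dots,x_{i,s_i-1}$ for each $1\le i\le t$; there are $(t-d)+\sum_{i=1}^t(s_i-1)=h_1$ variables in all. Let $\mathcal{O}$ consist of every monomial $\mu$ of degree at most $3$ in these variables such that, for each index $i$, at most one of $x_{i,1},\dots,x_{i,s_i-1}$ divides $\mu$ (necessarily to the first power). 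Equivalently, $\mathcal{O}$ is the order ideal generated by all products $z_1z_2z_3$ in which each $z_\ell$ is a free variable or an $x$-variable and no two of the chosen $x$-variables have the same index $i$.

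The first easy step is that $\mathcal{O}$ is an order ideal, since the defining condition is inherited by any divisor of a monomial in $\mathcal{O}$. Next I would compute $F(\mathcal{O})$ by factoring each $\mu\in\mathcal{O}$ uniquely as $\mu=Y\cdot X$, with $Y$ a monomial in the $y$'s and $X$ a squarefree monomial in the $x$'s using at most one variable from each index. For a given degree there are $\binom{(t-d)+j-1}{j}$ choices of $Y$ of degree $j$ and $\sum_{i_1<\dots<i_k}(s_{i_1}-1)\cdots(s_{i_k}-1)$ choices of $X$ of degree $k$; grouping the products of these counts by total degree $\delta=0,1,2,3$ reproduces exactly the given formulas for $h_1,h_2,h_3$ (and $F_0(\mathcal{O})=1$). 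This is a routine binomial computation --- most transparently phrased using the elementary symmetric functions of $s_1-1,\dots,s_t-1$ --- so I would not dwell on it.

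The substantive step is to show that $\mathcal{O}$ is pure, and this is the part I expect to require care, since for degenerate parameters (many $s_i$ equal to $1$, or $t=d$) there are few variables available to extend a monomial. If $t>d$, then any monomial of $\mathcal{O}$ of degree less than $3$ may be multiplied by a suitable power of $y_1$ to reach degree $3$ without leaving $\mathcal{O}$, since this does not change the $x$-part; thus $\mathcal{O}$ is pure of top degree $3$. If $t=d$, then $d\ge 3$ and there are no free variables, so a monomial of $\mathcal{O}$ of degree $k$ is a product of $x$-variables from exactly $k$ distinct indices, each of which is necessarily \emph{relevant}, meaning $s_i\ge 2$; writing $m$ for the number of relevant indices, the top degree of $\mathcal{O}$ is then $\min(m,3)$, and a monomial of degree $k<\min(m,3)$ can be multiplied by an $x$-variable drawn from any of the $m-k\ge 1$ unused relevant indices to obtain a monomial of $\mathcal{O}$ of degree $k+1$. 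In either case $\mathcal{O}$ is pure. Combined with the count of the previous step, this proves the lemma; the hypothesis $d\ge 3$ enters precisely in guaranteeing $t\ge 3$ in the case $t=d$, which is what keeps a lone variable $x_{i,r}$ from becoming a maximal element of $\mathcal{O}$ alongside a monomial of higher degree.
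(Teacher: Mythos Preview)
Your construction is exactly the same order ideal as the paper's: your ``free'' variables $y_1,\dots,y_{t-d}$ are the paper's $x_1,\dots,x_{t-d}$, your $x_{i,r}$ are the paper's $x_e$ for $e\in\widetilde{E}_i$, and your single rule ``at most one $x$-variable per index, to the first power'' reproduces precisely the enumerated sets $\mathcal{O}_2$ and $\mathcal{O}_3$ the paper writes out. The paper simply lists those sets and stops, whereas you package the count via the factorization $\mu=Y\cdot X$ and supply the purity argument (including the $t=d$ edge case) that the paper leaves implicit; so your proof is the same approach, only more fully justified.
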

\begin{proof}
Consider disjoint sets $\widetilde{E}_1,\ldots,\widetilde{E}_t$ with $|\widetilde{E}_i| = s_i-1$ for all $i$.  We will construct a pure order ideal $\mathcal{O}$ with $F(\mathcal{O}) = \mathbf{h}$ whose degree-one terms are $$\mathcal{O}_1 = \{x_1,\ldots,x_{t-d}\} \cup \{x_e: e \in \widetilde{E}_i\}_{i=1}^t.$$ We explicitly construct such an order ideal by setting 
\begin{eqnarray*}
\mathcal{O}_2 &=& \{x_ex_{e'}: e \in \widetilde{E}_i, e' \in \widetilde{E}_j, 1 \leq i <j \leq t\} \\
&& \cup \{x_jx_e: e \in \widetilde{E}_i, 1 \leq i \leq t, 1 \leq j \leq t-d\} \\
&& \cup \{\text{all degree 2 monomials in } x_1,\ldots,x_{t-d}\}
\end{eqnarray*}
and
\begin{eqnarray*}
\mathcal{O}_3 &=& \{x_ex_{e'}x_{e''}: e \in \widetilde{E}_i, e' \in \widetilde{E}_j, e'' \in \widetilde{E}_k, 1 \leq i<j<k \leq t\} \\
&& \cup \{x_kx_ex_{e'}: e \in \widetilde{E}_i, e' \in \widetilde{E}_j, 1 \leq k \leq t-d, 1 \leq i<j \leq t\} \\
&& \cup \{x_jx_kx_e: e \in \widetilde{E}_i, 1 \leq j<k \leq t-d, 1 \leq i \leq t\} \\
&& \cup \{x_j^2x_e: e \in \widetilde{E}_i, 1 \leq i \leq t, 1 \leq j \leq t-d\} \\
&& \cup \{\text{all degree 3 monomials in } x_1,\ldots,x_{t-d}\}.
\end{eqnarray*}
\end{proof}

\begin{lemma} \label{h-vec-lower}
For any positive integers $s_1,\ldots,s_t$, the vector $\mathbf{h}' = (1,h_1,h_2,h_3)$ with 
\begin{eqnarray*}
h_1 &=& \sum_{i=1}^t (s_i-1)+(t-d), \\
h_2 &=& \sum_{1 \leq i < j \leq t} (s_i-1)(s_j-1) + (t-d)\sum_{i=1}^t(s_i-1) + {t-d+1 \choose 2},\\
h_3 &=& \sum_{1 \leq i<j \leq t} (s_i-1)(s_j-1) + (t-d-1)\sum_{i=1}^t(s_i-1) + {t-d \choose 2}+1,
\end{eqnarray*}
is a pure $O$-sequence.
\end{lemma}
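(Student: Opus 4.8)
The plan is to mirror the construction in the proof of Lemma~\ref{h-vec-upper}: I would keep the same degree-$1$ and degree-$2$ layers $\mathcal{O}_1,\mathcal{O}_2$ of a pure order ideal and only shrink the top layer. Write $N=\sum_{i=1}^t(s_i-1)$ and $m=t-d$. The first observation is that $\mathbf{h}'$ agrees with the vector $\mathbf{h}$ of Lemma~\ref{h-vec-upper} in degrees $0,1,2$ and that $h_3=h_2-h_1+1$; equivalently, $\mathbf{h}'$ is the vector for which the Brown--Colbourn inequality \eqref{BC-ineq} is an equality at $s=3$, $\alpha=1$. So the task is to produce a pure order ideal with $F_1=h_1$, $F_2=h_2$, and $F_3=h_2-h_1+1$.

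I would take $\mathcal{O}_1$ and $\mathcal{O}_2$ verbatim from Lemma~\ref{h-vec-upper}, and set
$$
\mathcal{O}_3 \;=\; \{\, x_1\mu \;:\; \mu\in\mathcal{O}_2,\ x_1\nmid\mu \,\}\ \cup\ \{x_1^2 x_2\}
$$
(using $x_1^3$ in place of $x_1^2x_2$ when $m=1$). The degree-$2$ monomials of $\mathcal{O}_2$ divisible by $x_1$ are exactly the $h_1=N+m$ monomials $x_1 x_v$ with $x_v$ a variable, so the first set has $h_2-h_1$ elements and $|\mathcal{O}_3|=h_2-h_1+1=h_3$. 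Unwinding the definition, $\mathcal{O}_3$ is the union of the sets $\{x_1 x_e x_{e'}\}$ ($e,e'$ in distinct $\widetilde E_i$'s), $\{x_1 x_j x_e\}$ ($2\le j\le m$, $e\in\widetilde E_i$), $\{x_1 x_j x_k\}$ ($2\le j\le k\le m$), and $\{x_1^2 x_2\}$; counting these four blocks reproduces the stated formula for $h_3$ directly.

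Two checks remain. That $\mathcal{O}:=\{1\}\cup\mathcal{O}_1\cup\mathcal{O}_2\cup\mathcal{O}_3$ is an order ideal reduces to showing each degree-$2$ divisor of a monomial of $\mathcal{O}_3$ lies in $\mathcal{O}_2$: the degree-$2$ divisors of $x_1\mu$ are $\mu$ itself (in $\mathcal{O}_2$ by construction) and the monomials $x_1 x_v$ with $x_v\mid\mu$, all of which lie in $\mathcal{O}_2$ since $x_1$ times any variable does, while the divisors $x_1^2,x_1x_2$ of the extra monomial are degree-$2$ monomials in $x_1,\dots,x_m$. That $\mathcal{O}$ is pure means every monomial of $\mathcal{O}_1\cup\mathcal{O}_2$ divides a monomial of $\mathcal{O}_3$; this is immediate for $\mu\in\mathcal{O}_2$ with $x_1\nmid\mu$ (then $\mu\mid x_1\mu$), so the content is covering the monomials divisible by $x_1$ and the variables: $x_1^2\mid x_1^2x_2$, $x_1x_j\mid x_1x_jx_2$ for $j\ge2$, $x_1x_e\mid x_1x_2x_e$ for a class variable $e$ (or $x_1x_e\mid x_1x_ex_{e'}$ with $e'$ in another nonempty class), and single variables likewise.

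The step I expect to be the real obstacle is handling the degenerate parameter ranges in this purity argument — when $m=0$, so there is no distinguished variable $x_1$ to cone over, and when $m=1$ with only one $\widetilde E_i$ nonempty, so there is neither an $x_2$ nor a second class to use. In these ranges one must instead cone over a class variable, or simply write down the (very small) order ideals by hand, and verify that the hypotheses under which the lemma is actually used in the proof of Theorem~\ref{rank3-o-seq} rule out the cases where no such order ideal exists. Everything else is the same elementary monomial bookkeeping as in Lemma~\ref{h-vec-upper}.
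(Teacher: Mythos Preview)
Your approach matches the paper's: keep $\mathcal{O}_1$ and $\mathcal{O}_2$ from Lemma~\ref{h-vec-upper} verbatim and replace the top layer by a subset $\widetilde{\mathcal{O}}_3\subseteq\mathcal{O}_3$ of size $h_3$. The only difference is the particular set of degree-$3$ monomials chosen. The paper takes
\[
\widetilde{\mathcal{O}}_3=\{x_1x_ex_{e'}\}\;\cup\;\{x_j^2x_e:2\le j\le t-d\}\;\cup\;\{x_i^2x_j:1\le i<j\le t-d\}\;\cup\;\{\mu_0\},
\]
with $\mu_0=x_1^2x_{e_0}$ for some $e_0\in\bigcup_i\widetilde{E}_i$ (or $\mu_0=x_1^3$ if that union is empty), whereas you take $x_1\cdot\{\mu\in\mathcal{O}_2:x_1\nmid\mu\}$ together with one extra monomial $x_1^2x_2$. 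Your version is slightly more conceptual---it makes the identity $h_3=h_2-h_1+1$ visible in the construction and collapses the purity check to the single fact that $x_1$ times any variable already lies in $\mathcal{O}_2$---while the paper's version spreads the top monomials across all the $x_j$, which covers $x_j^2$ and $x_jx_e$ directly without appealing to a second auxiliary variable $x_2$. Both constructions require separate attention when $t-d\le 1$ and at most one $\widetilde{E}_i$ is nonempty, precisely the degenerate range you flag; the paper's only explicit nod to this is the case distinction in the definition of $\mu_0$.
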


\begin{proof}
As in the proof of Lemma \ref{h-vec-upper}, let $\widetilde{E}_1,\ldots,\widetilde{E}_t$ be disjoint sets with $|\widetilde{E}_i| = s_i-1$. Recall the order ideal $\mathcal{O}$ constructed in the proof of Lemma \ref{h-vec-upper}.  We will construct a pure order ideal $\widetilde{\mathcal{O}}$ with $F(\widetilde{\mathcal{O}}) = \mathbf{h}'$ such that $\widetilde{\mathcal{O}}_1 = \mathcal{O}_1$, $\widetilde{\mathcal{O}}_2 = \mathcal{O}_2$, and $\widetilde{\mathcal{O}}_3 \subseteq \mathcal{O}_3$.  We set 
\begin{eqnarray*}
\widetilde{\mathcal{O}}_3 &=& \{x_1x_ex_e': e \in \widetilde{E}_i, e' \in \widetilde{E}_j, 1 \leq i<j\leq t\} \\
&& \cup \{x_j^2x_e: e \in \widetilde{E}_i, 1 \leq i \leq t, 2 \leq j \leq t-d\} \\
&& \cup \{x_i^2x_j: 1 \leq i<j \leq t-d\} \cup \{\mu_0\},
\end{eqnarray*}
where $\mu_0$ is a monomial defined as follows: if $\widetilde{E}_1 \cup \cdots \cup \widetilde{E}_t$ is nonempty, choose some $e_0 \in \widetilde{E}_1\cup \cdots \cup \widetilde{E}_t$ and set $\mu_0 = x_1^2x_{e_0}$.  Otherwise, set $\mu_0 = x_1^3$.  This distinction in the monomial $\mu_0$ is necessary for handling the cases in which $|\widetilde{E}_1 \cup \cdots \cup \widetilde{E}_t| \leq 1$.
\end{proof}

\textit{Proof: (Theorem \ref{rank3-o-seq})}

Let $E_1,\ldots,E_t \subseteq E(M)$ denote the parallelism classes of $M$, and set $s_i:=|E_i|$.  Choose one representative $e_i$ from each class $E_i$, and let $W = \{e_1,\ldots,e_t\}$.  Observe that $\Delta:= M|_W$ is a simple matroid of rank $d$.  Let $\widetilde{E}_i = E_i \setminus \{e_i\}$, and notice that for any choice of $\widetilde{e}_{i_j} \in E_{i_j}, \{\widetilde{e}_{i_1},\ldots,\widetilde{e}_{i_k}\} \in \mathcal{I}(M)$ if and only if $\{e_{i_1}, \ldots, e_{i_k}\} \in \Delta$.  Thus
\begin{eqnarray*}
f_0(M) &=& \sum_{i=1}^ts_i \text{ and hence}\\
h_1(M) &=& \sum_{i=1}^t(s_i-1) + (t-d); \\
f_1(M) &=& \sum_{1 \leq i<j \leq t}s_is_j  \\
&=& \sum_{1 \leq i<j \leq t}(s_i-1)(s_j-1) + (t-1)\sum_{i=1}^t(s_i-1) + {t \choose 2} \text{ and hence} \\
h_2(M) &=& \sum_{1 \leq i<j \leq t}(s_i-1)(s_j-1) + (t-d)\sum_{i=1}^t(s_i-1) + {t-d+1 \choose 2}; \\
f_2(M) &\leq& \sum_{1 \leq i < j < k \leq t} s_is_js_k \text{ and hence} \\
h_3(M) &\leq& \sum_{1 \leq i<j<k \leq t}(s_i-1)(s_j-1)(s_k-1) + (t-d) \sum_{1 \leq i < j \leq t}(s_i-1)(s_j-1) \\
&& + {t-d+1 \choose 2}\sum_{i=1}^t(s_i-1) + {t-d+2 \choose 3}.
\end{eqnarray*}
On the other hand,  by the Brown-Colbourn inequality \eqref{BC-ineq}, 
\begin{eqnarray*}
h_3(M) &\geq& h_2(M) - h_1(M) + h_0(M) \\
&=& \sum_{1 \leq i<j \leq t} (s_i-1)(s_j-1) + (t-d-1)\sum_{i=1}^t(s_i-1) + {t-d \choose 2}+1.
\end{eqnarray*}

We construct a pure order ideal $\mathcal{O}'$ with $F(\mathcal{O}') = h(M)$ as follows.  Following the notation used in Lemmas \ref{h-vec-upper} and \ref{h-vec-lower}, we set $\mathcal{O}'_1 = \mathcal{O}_1$; $\mathcal{O}'_2 = \mathcal{O}_2$, and choose $\widetilde{\mathcal{O}}_3 \subseteq \mathcal{O}'_3 \subseteq \mathcal{O}_3$ with $|\mathcal{O}'_3| = h_3(M)$.  

\hfill{$\Box$}


\section{Matroids on at most 9 elements}

This part of our paper is experimental and is crucially based on the data provided to us by Dillon Mayhew and Gordon Royle. They constructed a computer database of all 385,369 matroids on at most nine elements \cite{Mayhew-Royle}. We used this data to generate a list of all possible $h$-vectors of matroid complexes on at most nine elements.  Given a loopless, coloopless matroid $M$ of rank $d$ on $n$ elements, we searched for a pure $O$-sequence $\mathcal{O}$ with $h(M) = F(\mathcal{O})$ in the following way: we know that $h_d(M)$ counts the number of top-degree monomials in $\mathcal{O}$, and $h_1(M) = n-d$ counts the number of variables (degree-one terms) in $\mathcal{O}$.  By sampling the space of monomials of degree $h_d(M)$ on $h_1(M)$ variables, we can generate thousands of pure $O$-sequences that are candidates to be $h$-vectors of matroid complexes. Of course, because of the tremendous restrictions that the basis exchange axioms place on matroids, and hence also on their $h$-vectors, we often generated pure $O$-sequences that were not matroid $h$-vectors.  For example $(1,5,15,27,22)$ and $(1,5,15,27,35)$ are both valid pure $O$-sequences which were generated, but the only $h$-vectors of matroid complexes of rank 4 with initial value $(1,5,15,27,*)$ are

\begin{verbatim}
(1 5 15 27 0)  (1 5 15 27 19) (1 5 15 27 20) (1 5 15 27 21) (1 5 15 27 24) 
(1 5 15 27 25) (1 5 15 27 26) (1 5 15 27 27) (1 5 15 27 30) (1 5 15 27 36).
\end{verbatim}

To generate the $O$-sequences, we used a combination of {\tt Perl} and {\tt Maple} code available at \url{www.math.ucdavis.edu/~ykemper/matroids.html}. 
The key idea is that  $m=h_d(M)$ provides us with the size of a monomial set to be sampled in a given number of variables $k=h_1(M)$. 
Specifically, we started with an initial set of $m$ monomials within the simplex $\{ (x_1,x_2,\dots,x_k) : \sum_i x_i=d, \ x_i \geq 0 \}$, then calculated the corresponding pure $O$-sequence by counting the number of monomials of each degree less than or equal to $d$ which divide one or more of the initial monomials.  One approach we used to generate large numbers of $O$-sequences was to sample randomly within the lattice points of this simplex.  Another was to perform ``mutation'' operations based on the idea that within the simplex, all 
lattice points are connected by the vectors $e_i-e_j$ of the root system $A_n$. We could therefore move ``locally'' from one pure order ideal to the next.  In addition, we partially adapted a simulated annealing
type method to search for particular $h$-vectors (program labeled {\tt Boxy}) not found in our random sampling.  {\tt Boxy} is also quite useful for computing the $O$-sequence of a family of 
monomials given the top-degree monomials of that family.  For example, by entering  $[[0,0,0,5], [0,0,2,3], [1,3,0,1]]$, one can obtain the 
corresponding $O$-sequence $(1,4,7,7,6,3)$.

The data we present in the web site is grouped by rank and corank. The largest groups are concentrated around rank four and corank five. 
We have decided not to include the cases of rank one, two, and three, and corank one and two
because they are consequences of theorems presented earlier. Note that we have not listed monomials for matroids with coloops: a matroid having $j$ coloops has an $h$-vector with $j$ zeros at the end, and the non-zero entries correspond to the $h$-vector of the same matroid with all coloops contracted. Since this new matroid also has a ground set of at most nine elements, a family of monomials has been provided for it elsewhere in the table.  The total number distinct matroid $h$-vectors (including $h$-vectors corresponding to matroids with coloops) and the total number of matroids per rank and corank are listed below.  When the rank plus corank is greater than nine, we have no information on the quantities of matroids or distinct $h$-vectors, and have indicated this with `--.' \\

\begin{table}[hbt]
\begin{tabular}{|c||p{1cm}|p{1.05cm}|p{1.1cm}|p{1.2cm}|p{1.2cm}|p{1.1cm}|p{1.1cm}|p{1.05cm}|p{1cm}|p{1cm}|}
\hline
\textbf{Rank}/\textbf{Corank} & \textbf{0} & \textbf{1} & \textbf{2} & \textbf{3} & \textbf{4} & \textbf{5} & \textbf{6} & \textbf{7} & \textbf{8} & \textbf{9} \\
\hline
\hline
\textbf{0} & 0 & 1 & 1 & 1 & 1 & 1 & 1 & 1 & 1 & 1\\
\hline
\textbf{1} & 1 & 1 & 1 & 1 & 1 & 1 & 1 & 1 & 1 & --\\
\hline
\textbf{2} & 1 & 2 & 4 & 6 & 8 & 12 & 17 & 20 & -- & --\\
\hline
\textbf{3} & 1 & 3 & 9 & 22 & 49 & 101 & 196 & -- & -- & --\\
\hline
\textbf{4} & 1 & 4 & 18 & 67 & 244 & 816 & -- & -- & -- & --\\
\hline
\textbf{5} & 1 & 5 & 31 & 186 & 1132 & -- & -- & -- & -- & --\\
\hline
\textbf{6} & 1 & 6 & 51 & 489 & -- & -- & -- & -- & -- & -- \\
\hline
\textbf{7} & 1 & 7 & 79 & -- & -- & -- & -- & -- & -- & --\\
\hline
\textbf{8} & 1 & 8 & -- & -- & -- & -- & -- & -- & -- & --\\
\hline
\textbf{9} & 1 & -- & -- & -- & -- & -- & -- & -- & -- & --\\
\hline
\end{tabular}
\begin{center}
\caption{Number of distinct matroid $h$-vectors for particular rank and corank}
\end{center}
\end{table}

\begin{table}[h]
\begin{tabular}{|c||p{1cm}|p{1.05cm}|p{1.1cm}|p{1.2cm}|p{1.2cm}|p{1.1cm}|p{1.1cm}|p{1.05cm}|p{1cm}|p{1cm}|}
\hline
\textbf{Rank}/\textbf{Corank} & \textbf{0} & \textbf{1} & \textbf{2} & \textbf{3} & \textbf{4} & \textbf{5} & \textbf{6} & \textbf{7} & \textbf{8} & \textbf{9} \\
\hline
\hline
\textbf{0} & 0 & 1 & 1 & 1 & 1 & 1 & 1 & 1 & 1 & 1\\
\hline
\textbf{1} & 9 & 8 & 7 & 6 & 5 & 4 & 3 & 2 & 1 & --\\
\hline
\textbf{2} & 8 & 14 & 24 & 30 & 40 & 42 & 42 & 29 & -- & --\\
\hline
\textbf{3} & 7 & 18 & 45 & 100 & 210 & 434 & 950 & -- & -- & --\\
\hline
\textbf{4} & 6 & 20 & 72 & 255 & 1664 & 189274 & -- & -- & -- & --\\
\hline
\textbf{5} & 5 & 20 & 93 & 576 & 189889 & -- & -- & -- & -- & --\\
\hline
\textbf{6} & 4 & 18 & 102 & 1217 & -- & -- & -- & -- & -- & -- \\
\hline
\textbf{7} & 3 & 14 & 79 & -- & -- & -- & -- & -- & -- & --\\
\hline
\textbf{8} & 2 & 8 & -- & -- & -- & -- & -- & -- & -- & --\\
\hline
\textbf{9} & 1 & -- & -- & -- & -- & -- & -- & -- & -- & --\\
\hline
\end{tabular}
\begin{center}
\caption{Total number of matroids, for particular rank and corank}
\end{center}
\end{table}


\section*{Acknowledgements}
The first author was partially supported by NSF grant DMS-0914107, the second and third authors were supported by NSF VIGRE grant DMS-0636297.
We are truly grateful to Dillon Mayhew and Gordon Royle for the opportunity to use their data in our investigations. We are also grateful to
David Haws for his help on getting this project started. We are incredibly grateful to Jonathan Browder and Ed Swartz for a number of thoughtful 
and insightful conversations.  Criel Merino was kind enough to give us several useful comments and references.



\bibliography{matroids}
\bibliographystyle{plain}

\end{document}